\documentclass{aims} 
\usepackage{amsmath}

\usepackage{paralist}

\usepackage[misc]{ifsym}
\usepackage{epsfig} 
\usepackage{epstopdf} 
\usepackage[colorlinks=true]{hyperref}
\hypersetup{urlcolor=blue, citecolor=red}
\usepackage{latexsym,amssymb,graphicx,upgreek}
\usepackage[mathscr]{euscript}
\usepackage{tcolorbox}
\usepackage{algorithm}
\usepackage{algorithmic}
\usepackage{xcolor}
\usepackage{enumitem}
\usepackage{mathtools}
\usepackage{caption}

\allowdisplaybreaks

\usepackage{titlesec}
\titlespacing*{\section}
  {0pt}    
  {2.0ex plus 0.5ex minus 0.2ex}  
  {1.0ex plus 0.2ex}     

\titlespacing*{\subsection}
  {0pt}
  {1.5ex plus 0.4ex minus 0.2ex}
  {0.7ex plus 0.2ex}

\theoremstyle{plain}
\newtheorem{theorem}{Theorem}[section]

\newtheorem{lemma}[theorem]{Lemma}
\newtheorem{proposition}[theorem]{Proposition}

\newtheorem{remark}[theorem]{Remark}
\theoremstyle{definition}

\DeclareMathOperator*{\Range}{Range}
\DeclareMathOperator*{\argmax}{arg\,max}

\newcommand{\dhat}{\widehat{\Delta}}

\newcommand{\M}{\mathcal{M}}
\renewcommand{\H}{\mathrm{H}}
\newcommand{\Ht}{\tilde{\mathrm{H}}}
\newcommand{\Ft}{\tilde{\mathrm{F}}}
\newcommand{\f}{\mathrm{f}}
\newcommand{\ft}{\tilde{\mathrm{f}}}
\newcommand{\F}{\mathrm{F}}

\newcommand{\I}{\mathrm{I}}

\newcommand{\G}{\mathrm{G}}
\newcommand{\A}{\mathrm{A}}
\newcommand{\B}{\mathrm{B}}
\newcommand{\C}{\mathrm{C}}
\newcommand{\K}{\mathrm{K}}

\newcommand{\postm}{\mu_\text{post}^y}
\newcommand{\priorm}{\mu_\text{pr}}

\newcommand{\R}{\mathbb{R}}
\newcommand{\GM}[2]{\mathrm{N}({#1}, {#2})}

\newcommand{\ipm}[2]{\langle {#1}, {#2}\rangle}
\newcommand{\ipar}{m}
\newcommand{\iparpr}{m_\mathrm{pr}}
\newcommand{\iparmap}{m_\mathrm{MAP}}
\newcommand{\ncov}{\Gamma_{\mathrm{noise}}}
\newcommand{\Cprior}{\C_\mathrm{pr}}
\newcommand{\Cpost}{\C_\mathrm{post}}
\newcommand{\Cinc}[1]{\C^{[{#1}]}_\mathrm{post}}
\newcommand{\defeq}{\vcentcolon=}
\newcommand{\deriv}[3]{\Delta_{{#1}}({#3}\,|\,{#2})}
\newcommand{\Ns}{d}
\newcommand{\ave}[2]{\mathsf{E}_{{#1}}\!\left\{ {#2} \right\}}
\newcommand{\obs}{y}

\newcommand{\avey}[1]{\mathsf{E}_{{\obs|\ipar}}\left\{ {#1} \right\}}
\newcommand{\DKL}{\mathrm{D}_{\mathrm{kl}}}
\newcommand{\pEIG}{\Phi_{\mathrm{eig}}}
\newcommand{\EIG}{\mathsf{EIG}}
\newcommand{\trace}{\mathrm{trace}}
\renewcommand{\L}{\mathscr{L}}

\newcommand{\Lsymp}{\mathscr{L}_{\text{sym}}^{+}}

\newcommand{\arr}{\upalpha_{rr}}
\newcommand{\att}{\upalpha_{tt}}
\newcommand{\art}{\upalpha_{rt}}

\title[Submodularity of EIG in infinite dimensions]
{Submodularity of the expected information gain 
in infinite-dimensional linear inverse problems}  
\author[Alen Alexanderian and Steven Maio]{}

\subjclass{Primary: 35R30, 62K05, 90C27; Secondary: 47B02.}
\keywords{Inverse problems, Bayesian formulations, design of experiments, 
expected information gain, submodularity}
\thanks{$^*$Corresponding author: Alen Alexanderian (\texttt{alexanderian@ncsu.edu})}

\begin{document}

\maketitle
\centerline{\scshape
Alen Alexanderian$^{{\href{mailto:alexanderian@ncsu.edu}{\textrm{\Letter}}}*1}$
and Steven Maio$^{{\href{mailto:smaio@ncsu.edu}{\textrm{\Letter}}}1}$}

\medskip

{\footnotesize
 \centerline{$^1$Department of Mathematics, North Carolina State University, USA}
} 

\bigskip

\begin{abstract}
We consider infinite-dimensional linear Gaussian Bayesian inverse problems with
uncorrelated measurement errors and focus on the problem of selecting sensor
placements that maximize the expected information gain (EIG).  This study is
motivated by optimal sensor placement for linear inverse problems constrained by
partial differential equations (PDEs).  We consider measurement models where
each sensor collects a single-snapshot measurement.  This covers sensor
placement for inverse problems governed by linear steady PDEs or evolution
equations with final-in-time observations.  It is well-known that in the
finite-dimensional (discretized) formulations of such inverse problems, the EIG is a
monotone submodular function. This also entails a theoretical guarantee for
greedy sensor placement in the discretized setting.  We extend the result on
submodularity of the EIG to the infinite-dimensional setting, proving that the
approximation guarantee of greedy sensor placement remains valid in the
infinite-dimensional limit.  We also discuss computational considerations and
present strategies that exploit problem structure and submodularity to yield 
efficient implementations of the greedy procedure.  
\end{abstract}

\section{Introduction}
Inverse problems constrained by partial differential equations (PDEs) are common
in applications.  We focus on linear inverse problems.
Examples of such problems include source inversion problems governed by linear
steady or time-dependent PDEs.  
We assume that the inversion parameters of
interest take values in an infinite-dimensional real separable Hilbert space
$\M$ and that data are collected from a set of sensors.
Our main goal is to show that, for infinite-dimensional linear Gaussian Bayesian
inverse problems with uncorrelated measurement errors, the expected information
gain (EIG) is a monotone submodular function.

Consider a generic linear inverse problem of estimating 
$m \in \M$ using the observation model,
\begin{equation}\label{equ:model}
\obs = \F m + \eta.
\end{equation}
Here, $\F:\M \to \R^d$ is a continuous linear transformation, $\obs$ is data,
and $\eta \in \R^d$ models observation error.  We let $\eta \sim \GM{0}{\ncov}$,
with $\ncov =
\mathrm{diag}(\sigma_1^2, \ldots, \sigma_\Ns^2)$.  
The operator $\F$ is called the parameter-to-observable map. 
Here, $\F$ maps
the inversion parameter $m$ to model output at $d$ candidate sensor locations.
In inverse problems governed by PDEs, computing $\F m$ requires solving the
governing PDE and extracting solution values at measurement points.  In such
problems, optimal sensor placement is crucial---a suboptimal placement of
sensors can result in waste of experimental resources and highly inaccurate
parameter estimates.  Optimal sensor placement can be formulated as an optimal
experimental design (OED) 
problem~\cite{Alexanderian21,Ucinski05}.  
The fundamental question is how should we place the sensors to maximize the
information gain about the parameter $m$.

The abstract Hilbert space setting considered herein covers a broad range of 
PDE-constrained linear inverse problems. One example is source inversion in a
steady advection-diffusion equation, where sensor measurements of the state
variable $u$ are used to estimate the source term $m$ in the following boundary 
value problem:
\begin{equation}
\begin{alignedat}{2}
-\kappa \Delta u + \nabla \cdot (v u) &= m \quad &\text{in } &\Omega, \\
u &= 0 \quad &\text{on } &\Gamma_\text{D},\\
\kappa \nabla u \cdot n &= 0 &\quad \text{on } &\Gamma_\text{N}. 
\end{alignedat}
\end{equation} 
Here, $\Omega \subset \R^2$ is a bounded domain, $\kappa > 0$ is a diffusion
constant, and $v$ is a prescribed velocity field. The boundary is partitioned as
$\partial\Omega = \Gamma_\text{D} \cup \Gamma_\text{N}$ with $\Gamma_\text{D}
\cap \Gamma_\text{N} = \emptyset$.  In this example, the OED problem seeks a
placement of sensors at which measurements of $u$ are taken so as to
maximize the information gain about the source term $m$.

In the rest of the article, we focus on the general linear inverse problem of
estimating $m \in \M$ from the observation model~\eqref{equ:model}.
Considering the inverse problem within a Bayesian paradigm, we
assume a Gaussian prior 
$\priorm = \GM{\iparpr}{\Cprior}$, with $\Cprior$ a strictly 
positive self-adjoint trace-class operator on $\M$ and $\iparpr$ a sufficiently 
regular element of $\M$. This prior measure encodes our prior state of 
knowledge regarding the inversion parameter before collecting measurement data.
In the present linear Gaussian problem setup, it is known that the posterior
measure $\postm$, which describes the distribution law of the inversion parameter 
conditioned on observed data, is also Gaussian. Furthermore, this Gaussian 
posterior admits closed-form expressions for its
mean and covariance operator~\cite{Stuart10}; see Section~\ref{sec:bayesIP}.

We consider the OED problem of choosing an optimal subset from a set of candidate 
sensor locations subject to a cardinality constraint.  Specifically, consider
the problem of finding a sensor placement that maximizes the EIG, as measured by
the expected Kullback--Leibler (KL)
divergence~\cite{kullback:1951,ManningSchutze99} from $\postm$ to $\priorm$:
\begin{equation}\label{equ:avgKL} 
    \EIG \defeq  \ave{\priorm}{\avey{\DKL(\postm\| \priorm)}}.  
\end{equation} 
Here, $\DKL(\postm\| \priorm)$ denotes the KL 
divergence from $\postm$ to $\priorm$.
Namely, 
\[ 
    \DKL(\postm\| \priorm) \defeq \int_\M \log\left[\frac{d\postm}{d\priorm}\right]\, d\postm,  
\] 
which is well-defined 
due to the absolute continuity of $\postm$ with respect to $\priorm$.  
Also, in~\eqref{equ:avgKL}, $\mathbb{E}_{\priorm}$ and
$\mathbb{E}_{{\obs|\ipar}}$, respectively, denote expectation with respect to
$\priorm$ and the likelihood measure.

In the present linear Gaussian setting, $\EIG$ admits an analytic
expression, which also extends to infinite 
dimensions~\cite{AlexanderianGloorGhattas16,AlexanderianSaibaba18}; see
Section~\ref{sec:bayesIP} for details.  We consider the problem of finding an
optimal subset of size $k$ from a set of $\Ns$ candidate sensor
locations such that $\EIG$ is maximized.
This is a challenging optimization problem with combinatorial complexity. 
An exhaustive search for an optimal subset is impractical for even modest values of 
$\Ns$ and $k$. For example, with $\Ns = 100$ and $k = 20$, an exhaustive 
search for an optimal sensor placement requires  
${100\choose 20} = \mathcal{O}(10^{20})$ function evaluations. A standard approach 
for obtaining a near optimal solution is to follow a greedy strategy,
where sensors are chosen one at a time: at each step, we choose a sensor that 
results in the largest incremental increase in the value of $\EIG$. 

A greedy approach for finding near optimal sensor placements is a popular
approach and has long been used in finite-dimensional parameter estimation
problems.  This approach is simple to implement. It is also built on a solid
theoretical foundation and admits an approximation
guarantee~\cite{KrauseGolovin14,NemhauserWolseyFisher78}.  This is rooted in a
key property of $\EIG$ in the linear Gaussian setup:
\emph{submodularity}~\cite{KelmansKimelfeld83,
KrauseSinghGuestrin08,ShamaiahBanerjeeVikalo10}; see
Section~\ref{sec:submodular}.
However, the existing proofs
for submodularity of $\EIG$ consider only the finite-dimensional parameter estimation
problems~\cite{KrauseSinghGuestrin08,ShamaiahBanerjeeVikalo10}.
Greedy sensor placement for maximizing $\EIG$ has also been considered for
PDE-based inverse problems~\cite{AlexanderianDiazRaoEtAl2026,EswarRaoSaibaba2026}.  In practical
computations, one considers a discretized version of the inverse problem, in
which case the theoretical results regarding submodularity of $\EIG$ apply.

However, in the infinite-dimensional inverse problems under study, a rigorous
function-space formulation of the problem is essential.  This is important from
both theoretical and practical perspectives. Namely, a rigorous
infinite-dimensional formulation guides suitable choices for the prior
covariance operator, the definition of the OED objective, and consistent
discretization schemes for the different components of the problem.  It is
therefore of interest to establish the submodularity property of the EIG in an
infinite-dimensional setting as well.  To our knowledge, this has not been
considered in the literature.  

In this article, we prove that $\EIG$ is a monotone submodular function in
linear Gaussian inverse problems on infinite-dimensional real separable Hilbert
spaces.  Keeping the discussion in a function space setting leads to elegant
arguments and helps provide key structural insights that remain relevant for
finite-dimensional problems.  We also discuss computational considerations and
outline efficient implementations of the greedy procedure.  In particular, 
we incorporate
formulations that maximally exploit the problem structure and adopt a key tool
from submodular optimization---lazy greedy.  The latter is well-known in
combinatorial optimization and the computer science communities, but is
underutilized in sensor placement problems for infinite-dimensional inverse
problems governed by PDEs. 

To facilitate the proof of submodularity of $\EIG$, in
Section~\ref{sec:design}, we describe how a design is incorporated in the
formulation of the inverse problem and present a convenient representation of the
posterior covariance operator $\Cpost$. We also derive an update formula for $\Cpost$
and consider some monotonicity properties of $\Cpost$.  The discussion in
Section~\ref{sec:design} concludes by presenting a precise mathematical
description of the OED problem under study.  In Section~\ref{sec:main}, we prove
that $\EIG$ is a monotone submodular function.  This is followed by a discussion of
computational considerations regarding efficient implementations of a greedy
procedure in Section~\ref{sec:lazy_dopt}.  Concluding remarks are provided in
Section~\ref{sec:conc}.

\section{Preliminaries}
\label{sec:background}
In this section, we discuss the requisite notation and background 
concepts needed in the rest of the article.

\subsection{Basic notation and definitions}
As before, $\M$ is assumed to be an infinite-dimensional real separable Hilbert 
space.  The inner product on $\M$ is denoted by $\ipm{\cdot}{\cdot}$ and the 
corresponding induced norm is $\| \cdot \| = \ipm{\cdot}{\cdot}^{1/2}$.  We
denote the space of bounded linear operators on $\M$ by $\L(\M)$.  An operator
$\A \in \L(\M)$ is said to be \emph{invertible} if it has an inverse in
$\L(\M)$; see, e.g.,~\cite[Page 39]{MacCluer09}.  
For $\A \in \L(\M)$, its adjoint is denoted by $\A^*$. 

In what follows, we also need to work with positive self-adjoint 
operators and trace-class operators.
The set of positive self-adjoint 
bounded linear operators on $\M$ is denoted by $\Lsymp(\M)$: 
\[
\Lsymp(\M) \defeq \{ \A \in \L(\M) : \A = \A^* \text{ and }
\ipm{\A u}{u} \geq 0, \quad \text{for all } u \in \M\}.
\]

A linear operator $\A \in \L(\M)$ is said to be \emph{trace-class} if 
\begin{equation}\label{equ:traceclass_def}
\sum_{i=1}^\infty \ipm{(\A^*\A)^{1/2} e_i}{e_i} < \infty,
\end{equation}
for an orthonormal basis $\{e_i\}_{i=1}^\infty$ of $\M$. 
If~\eqref{equ:traceclass_def} holds for one orthonormal basis, then
it holds for any other orthonormal basis, and the value of the sum is
independent of the choice of basis~\cite[Chapter VI]{ReedSimon80}.
For a trace-class operator, its trace, 
\[
\trace(\A) \defeq \sum_{i=1}^\infty \ipm{\A e_i}{e_i}, 
\]
is finite and independent of the choice of an orthonormal basis.

For a positive self-adjoint trace-class operator $\K$ on $\M$, 
the \emph{Fredholm determinant}~\cite{Simon77,Simon05} of $\I+\K$ is given by
\[
\det(\I+\K) := \prod_{n=1}^\infty (1+\lambda_n),
\]
where $\{\lambda_n\}_{n=1}^{\infty}$ are the eigenvalues of $\K$, counted with
multiplicity.  The Fredholm determinant can also be defined for trace-class
operators that are not necessarily self-adjoint.  This is done via a
construction involving exterior powers of Hilbert spaces~\cite{Simon77}.  See
also~\cite{BoydSnigireva19}, which considers both real and complex Hilbert
spaces.  

Various familiar properties of the determinant from linear algebra 
can be extended to the case 
of the Fredholm determinant. In particular, 
we have the following multiplicative property: if $\K_1, \K_2$ are 
trace-class operators, then
\begin{equation}\label{equ:mult}
    \det\big((\I+\K_1)(\I+\K_2)\big) = \det(\I+\K_1)\det(\I+\K_2).
\end{equation}
In the discussions that follow, we work primarily with operators of the form
$\I+\K$ with $\K \in \Lsymp(\M)$ of finite rank, where the Fredholm determinant
behaves analogously to the determinant in finite dimensions.

When considering rank-one updates, we will be using the tensor product of elements in 
$\M$. For $u$ and $v$ in $\M$, their tensor product $u \otimes v \in \L(\M)$ is the operator 
\[
(u \otimes v)x \defeq \ipm{v}{x}u, \quad \text{for all } x \in \M.
\] 
We also recall the following identity: for $\A, \B \in \L(\M)$ and $u, v \in \M$, 
\[
\A (u \otimes v) \B = (\A u) \otimes (\B^* v).
\] 

\subsection{Bayesian linear inverse problems and optimal sensor placement}\label{sec:bayesIP}
In the linear Gaussian Bayesian inverse problem setup described in the introduction, 
the posterior measure is Gaussian,
$\postm = \GM{\iparmap}{\Cpost}$, with
\begin{equation}\label{equ:posterior}
\Cpost = (\F^*\ncov^{-1}\F + \Cprior^{-1})^{-1}
\quad \text{and} \quad
\iparmap = \Cpost(\F^*\ncov^{-1} \obs + \Cprior^{-1}\iparpr).
\end{equation}
See~\cite{Stuart10}, for
further details. The notation for the posterior mean is motivated by its 
variational characterization as the maximum a posteriori probability (MAP) point. 
Specifically, it can be shown~\cite{DashtiLawStuart13,Stuart10} 
that $\iparmap$ is the unique global minimizer of
\begin{equation}\label{equ:MAP_estimation}
J(m) \defeq \frac12 (\F m - y)^{\top} \ncov^{-1} (\F m - y) + 
\frac12 \| \Cprior^{-1/2}(m - \iparpr)\|^2
\end{equation}
in the Cameron--Martin space $\M_{\priorm} \defeq \Range(\Cprior^{1/2})$.

We next define the operators
\begin{equation}\label{equ:hessian_operators}
  \H \defeq \F^* \ncov^{-1}\F  \quad\text{and}\quad
  \Ht \defeq \Cprior^{1/2} \H \Cprior^{1/2}.
\end{equation}
These definitions are also linked to the variational characterization of $\iparmap$.
Namely, $\H$ is the Hessian of the data misfit term in the definition of 
the cost functional $J$
in~\eqref{equ:MAP_estimation}.
In this context, 
$\Ht$ is known as the \emph{prior-preconditioned data misfit Hessian}.
Using this notation, we can represent the posterior covariance operator as 
\begin{equation}\label{equ:Cpost_alt}
\Cpost = (\H + \Cprior^{-1})^{-1} = \Cprior^{1/2} (\I + \Ht)^{-1} \Cprior^{1/2}. 
\end{equation}

Optimal sensor placement is framed as the problem of identifying a placement of
sensors that maximizes the information content extracted from the collected
measurements.  As noted in the introduction, this can be formulated as an
optimal experimental design (OED) problem. This requires specifying a design
criterion that quantifies the quality of a sensor placement (experimental
design). Examples include the Bayesian A-optimality criterion, which
quantifies the average posterior variance, or the expected information gain
(EIG) defined in~\eqref{equ:avgKL}. 

In the present class of infinite-dimensional linear inverse 
problems, the EIG 
admits the following analytic expression~\cite{AlexanderianGloorGhattas16,AlexanderianSaibaba18}:
\begin{equation}\label{equ:EIG_def}
\EIG := \frac12\log\det(\I + \Ht). 
\end{equation} 
The corresponding OED problem seeks sensor placements that maximize $\EIG$.

\subsection{Submodular function optimization}\label{sec:submodular}
Let $V = \{1, \ldots, \Ns\}$ be finite set, and consider 
a function $f: 2^{V}\to\mathbb{R}$. 
In the present setting, $V$ is often referred to as the \emph{ground set} and 
indexes a set of experiments. Also, $2^V$ is the power set of $V$. 
The function $f$ assigns a utility to each subset of experiments.

For $S \subseteq V$, the marginal gain~\cite{KrauseGolovin14} 
of adding an element $v \in V$ to $S$ is defined as
\[
\deriv{f}{S}{v} \defeq f(S \cup \{ v\}) - f(S).
\]
The function $f$ is said to be 
\begin{itemize}
    \item \textit{monotone}, if $\deriv{f}{S}{v} \ge 0 $ for every $S \subset V$ and $v \in V \backslash S$;
\item  
\textit{submodular}, if for every $A \subseteq B \subseteq V$ and $v \in V\setminus B$,
we have 
\[
    \deriv{f}{A}{v} \ge \deriv{f}{B}{v}.
\]
\end{itemize}
Submodularity can be understood as a diminishing returns property.
In the context of experimental design, this says that conducting the experiments in 
$B \setminus A$ does not increase the utility of conducting the experiment indexed by $v \in 
V \setminus B$.

It is well-known~\cite[Theorem 44.1]{Schrijver03} that proving submodularity can be
reduced to the case of one element increments. Specifically, to prove $f$ is
submodular, we need to verify that for every $S \subseteq V$ and distinct $v,
w\in V\setminus S$
\begin{equation}\label{equ:incr-submodularity}
    \deriv{f}{S}{v} 
    \ge 
    \deriv{f}{S\cup\{w\}}{v}. 
\end{equation}

Consider now an optimization problem of the form 
\begin{equation}\label{equ:optim_binary}
\begin{aligned}
&\max_{S \subset V} f(S), \\
&\text{subject to } |S| = k.
\end{aligned}
\end{equation}
In the case of an optimal sensor placement problem, 
$V$ indexes a set of candidate sensor locations, $f$ is a utility function 
(e.g., the EIG), and $k$ is a sensor budget.

Maximizing a monotone submodular function subject to cardinality constraints is
a challenging combinatorial optimization problem.  A tractable and popular
approach to obtain approximate solutions to such optimization problems 
is the greedy algorithm, which  follows the following intuitive
procedure: Begin with $S = \emptyset$, and add elements of $V$ to $S$ one at
a time; in each step, pick an element that provides the largest marginal gain. 
This procedure is outlined in Algorithm~\ref{alg:greedy_submodular}.

For a monotone submodular function $f$, the greedy procedure 
applied to~\eqref{equ:optim_binary}
achieves an approximation ratio of $1 - e^{-1}$ with only $\mathcal{O}(kd)$ 
evaluations of $f$. That is, 
the output $S$ Algorithm~\ref{alg:greedy_submodular} satisfies
\begin{equation}\label{eq:approximation-ratio}
f(S) \ge \left(1 - 1/e \right)\max\left\{f(R) : |R|\le k\right\},
\end{equation}
where $e$ is the base of the natural logarithm;
see~\cite{KrauseGolovin14,NemhauserWolseyFisher78} for details.

\begin{algorithm}[H]
\caption{Greedy algorithm for submodular function maximization}
\label{alg:greedy_submodular}
\begin{algorithmic}[1]
\REQUIRE Ground set $V$, function $f : 2^V \to \R$, and budget $k$
\ENSURE Selected set $S \subseteq V$ with $|S| = k$
\STATE $S_0 = \emptyset$
\FOR{$l = 1,\dots,k$}
    \STATE $\displaystyle v^\ast = \argmax_{v \in V \setminus S_{l-1}} \deriv{f}{S_{l-1}}{v}$  
    \STATE $S_l = S_{l-1} \cup \{v^\ast\}$
\ENDFOR
\RETURN $S = S_k$
\end{algorithmic}
\end{algorithm}

The utility of greedy strategies for submodular optimization is not limited to
simple cardinality constraints. Suitable variants of the greedy approach
maintain rigorous approximation guarantees under more general types of
constraints relevant to optimal sensor placement. Here, we briefly describe two
important constraint types: \emph{matroid} and \emph{knapsack} constraints.

Matroid constraints are relevant in the context of sensor
scheduling~\cite{KrauseGolovin14}. In such problems one has a grid of available
sensors that can be activated as needed.  To conserve energy or operating costs,
the OED problem seeks to find optimal times for activating each sensor.  Matroid
constraints limit the number of sensors that can be active at any single
measurement time. This type of sensing strategy is also discussed
in~\cite[Chapter 4]{Ucinski05} from a different perspective, where the term
\emph{scanning sensors} is used to refer to such measurement setups.  

Knapsack constraints on the other hand allow elements of $V$ to have varying
costs, and constrain the total cost of a feasible subset to be below a given
budget.  This is relevant in sensor placement problems where different types of
sensors, which provide varying levels of accuracy or measure different physical
quantities, are used.

\section{Incorporating the experimental design in the inverse problem}\label{sec:design}
In this section, we consider how the design enters the formulation of the
inverse problem.  We index the set of candidate sensor sites by the set $V
= \{1, \ldots, \Ns\}$.  Subsequently, an experimental design (sensor placement)
is characterized by a subset $S \subset V$.  
We begin our discussion by deriving a convenient representation of the posterior
covariance operator as a function of $S \in 2^V$ in Section~\ref{sec:Cpost}.  
Subsequently, we consider the impact of adding a single sensor on the posterior
covariance operator and some monotonicity properties of $\Cpost$ in
Section~\ref{sec:update}.  Finally, we formulate the OED problem of finding
sensor placements that maximize the EIG in
Section~\ref{sec:eig_optim}.

\subsection{Design-dependent posterior covariance operator}\label{sec:Cpost}  
We begin by considering the following technical result, which provides a
convenient representation for the data misfit Hessian $\H$ in~\eqref{equ:hessian_operators} 
in terms of rank-one updates. 
This also plays a key role in proving the submodularity of
the EIG, as detailed in Section~\ref{sec:main}.  
\begin{lemma}\label{lem:Hrep}
The operator $\H$ admits the following representation: 
\begin{equation}\label{equ:Hrep}
\H = \sum_{i=1}^\Ns \f_i \otimes \f_i, 
\end{equation}
where $\f_i = \sigma_i^{-1} \F^* e_i$, with $\{e_i\}_{i=1}^d$ the standard 
basis in $\R^d$ and $\F^*$ the adjoint of $\F$.
\end{lemma}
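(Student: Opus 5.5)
The plan is to verify the identity \eqref{equ:Hrep} by letting both sides act on an arbitrary $u \in \M$ and comparing the results. Since $\F:\M\to\R^\Ns$ is a bounded linear map, its adjoint $\F^*:\R^\Ns\to\M$ is also bounded, so each $\f_i = \sigma_i^{-1}\F^* e_i$ is a well-defined element of $\M$. The right-hand side is therefore a finite sum of bounded rank-one operators and lies in $\L(\M)$. It thus suffices to show that $\H u = \sum_{i=1}^\Ns (\f_i\otimes\f_i)u$ for every $u \in \M$.

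First I expand the data-space vector $\ncov^{-1}\F u$ in the standard basis. Because $\ncov^{-1} = \mathrm{diag}(\sigma_1^{-2},\ldots,\sigma_\Ns^{-2})$, we have
\[
\ncov^{-1}\F u = \sum_{i=1}^\Ns \sigma_i^{-2}\,(e_i^\top \F u)\, e_i .
\]
By the definition of the adjoint, with the Euclidean inner product on $\R^\Ns$, the coefficient satisfies $e_i^\top \F u = \ipm{\F^* e_i}{u}$.

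Next I apply $\F^*$ and use its linearity, which gives
\[
\H u = \F^*\ncov^{-1}\F u = \sum_{i=1}^\Ns \sigma_i^{-2}\ipm{\F^* e_i}{u}\,\F^* e_i = \sum_{i=1}^\Ns \ipm{\f_i}{u}\,\f_i .
\]
By the definition of the tensor product, the last expression equals $\sum_{i=1}^\Ns(\f_i\otimes\f_i)u$. Since $u$ was arbitrary, this establishes \eqref{equ:Hrep}.

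There is no genuine obstacle in this argument. The only points needing care are the bookkeeping of the two inner products, one on $\R^\Ns$ and one on $\M$, when invoking the adjoint relation, and the observation that the diagonal structure of $\ncov$ is what lets the factor $\sigma_i^{-2}$ split symmetrically as $\sigma_i^{-1}\cdot\sigma_i^{-1}$ across the two copies of $\F^* e_i$. A correlated noise covariance would instead produce cross terms $\f_i\otimes\f_j$.
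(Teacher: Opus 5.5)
Your proof is correct and is essentially the paper's argument. The paper writes $\ncov^{-1} = \sum_{i=1}^{\Ns} \sigma_i^{-2}\, e_i \otimes e_i$ and uses the identity $\A(u\otimes v)\B = (\A u)\otimes(\B^* v)$ to obtain $\F^*(e_i\otimes e_i)\F = (\F^* e_i)\otimes(\F^* e_i)$, which is exactly what your pointwise computation checks. Your version also avoids applying that identity to maps between different spaces ($\F:\M\to\R^\Ns$, $\F^*:\R^\Ns\to\M$), whereas the paper only stated it for operators in $\L(\M)$.
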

\begin{proof}
This follows by noting that 
\[
\F^*\ncov^{-1} \F 
= \F^* \Big(\sum_{i=1}^d \sigma_i^{-2} e_i \otimes e_i \Big)\F 
= \sum_{i=1}^d (\sigma_i^{-1}\F^* e_i) \otimes (\sigma_i^{-1}\F^* e_i). \qedhere
\]
\end{proof}
The representation~\eqref{equ:Hrep} enables defining 
$\H$ and $\Ht$ as a function of $S \in 2^V$, where $V = \{1, \ldots,
d\}$ indexes the set of all candidate sensor locations. Namely, 
$\H(S) = \sum_{i \in S} \f_i \otimes \f_i$ and 
\begin{equation}\label{equ:Ht_rep}
\Ht(S) = \sum_{i \in S} \ft_i \otimes \ft_i, \quad
\text{where}\quad \ft_i = \Cprior^{1/2} \f_i. 
\end{equation}
Also, recalling the expression for the posterior covariance operator in~\eqref{equ:Cpost_alt},
we have
\begin{equation}\label{equ:Crep}
\Cpost(S) =  \Cprior^{1/2} \big(\I + \Ht(S)\big)^{-1} \Cprior^{1/2}.  
\end{equation}

Considering the observation model~\eqref{equ:model}, 
it is straightforward to see that 
\begin{equation}\label{equ:yi}
   y_i = \ipm{\F^* e_i}{m} + \eta_i, \quad i \in \{1, \ldots, d\},
\end{equation}
where $\{e_i\}_{i=1}^d$ are the standard coordinate vectors in $\R^d$.
Thus, in the case of uncorrelated measurement errors considered here, 
\eqref{equ:Hrep} expresses $\H$  
as a sum of rank-one operators each corresponding to a
specific sensor.  

\begin{remark}\label{rmk:correlated_Hessian}
The representation~\eqref{equ:Hrep} is a consequence of 
assuming a diagonal noise covariance matrix.
In the case $\ncov$ is non-diagonal, 
the off-diagonal entries of the noise covariance matrix introduce interactions between
different sensors and the representation~\eqref{equ:Hrep} is lost.
In that case,
$\ncov^{-1} = \sum_{i,j=1}^d 
(\ncov^{-1})_{ij} e_i \otimes e_j$ and 
\[
    \H=\F^*\ncov^{-1}\F
     =\sum_{i,j=1}^d (\ncov^{-1})_{ij}\, (\F^* e_i) \otimes (\F^* e_j).
\]
That is, 
$\H$ is no longer a sum of 
rank-one contributions from individual sensors.
\end{remark}

\subsection{An update formula and monotonicity of $\Cpost(S)$}\label{sec:update}
Let us consider the impact of adding one sensor on the posterior covariance operator. 
Namely, for any $i \notin S$, we consider the updated posterior covariance operator, 
\[
\Cinc{i}(S) \defeq \Cprior^{1/2} (\I + \Ht(S) + \ft_i \otimes \ft_i)^{-1} \Cprior^{1/2}.
\]
Deriving a convenient expression for $\Cinc{i}(S)$ requires the following 
infinite-dimensional 
analogue of Sherman--Morrison--Woodbury formula for rank-one updates; 
this formula 
will also be needed in Section~\ref{sec:main}.
\begin{lemma}\label{lem:smw_basic}
Let $\A\in\L(\M)$ be a bijection and 
let $u,v\in \M$ and assume $1+\ipm{v}{\A^{-1}u}\neq 0$. Then
$\A+u\otimes v$ is invertible and
\[
(\A+u\otimes v)^{-1}
= \A^{-1} - \frac{\A^{-1}(u\otimes v)\A^{-1}}{1+\ipm{v}{\A^{-1}u}}.
\]
\end{lemma}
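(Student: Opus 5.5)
Since $\A$ is a bijection in $\L(\M)$, the bounded inverse theorem gives $\A^{-1}\in\L(\M)$. I will define the candidate operator
\[
\B \defeq \A^{-1} - \frac{\A^{-1}(u\otimes v)\A^{-1}}{1+\ipm{v}{\A^{-1}u}},
\]
which is bounded because it is $\A^{-1}$ minus a scalar multiple of a rank-one operator. I will then show by direct computation that $(\A+u\otimes v)\B=\I$ and $\B(\A+u\otimes v)=\I$. This proves that $\A+u\otimes v$ is invertible in $\L(\M)$ with inverse $\B$.

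Set $\gamma\defeq 1+\ipm{v}{\A^{-1}u}$. The key tool is the product rule for rank-one operators,
\[
(u\otimes v)(x\otimes z)=\ipm{v}{x}\,u\otimes z,
\]
which follows immediately from the definition $(u\otimes v)x=\ipm{v}{x}u$. Together with the identity $\A(u\otimes v)\B=(\A u)\otimes(\B^* v)$ recalled in the preliminaries, this lets me rewrite $\A^{-1}(u\otimes v)\A^{-1}=(\A^{-1}u)\otimes((\A^{-1})^*v)$.

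For the right inverse, expanding gives
\[
(\A+u\otimes v)\B=\I+(u\otimes v)\A^{-1}-\gamma^{-1}\Big[(u\otimes v)\A^{-1}+(u\otimes v)\A^{-1}(u\otimes v)\A^{-1}\Big].
\]
The middle factor simplifies as $(u\otimes v)\A^{-1}(u\otimes v)=\ipm{v}{\A^{-1}u}\,u\otimes v$, since $(u\otimes v)\A^{-1}u=\ipm{v}{\A^{-1}u}u$. The bracket therefore equals $\gamma\,(u\otimes v)\A^{-1}$, which cancels the second term and leaves $\I$. The left inverse is handled symmetrically: the bracket becomes $\A^{-1}(u\otimes v)+\A^{-1}(u\otimes v)\A^{-1}(u\otimes v)=\gamma\,\A^{-1}(u\otimes v)$.

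The hypothesis $\gamma\neq0$ is used only to form $\B$. The bookkeeping of the scalar $\ipm{v}{\A^{-1}u}$ is the one step that needs care; everything else is routine. Alternatively, I could verify injectivity and surjectivity of $\A+u\otimes v$ directly and rely on the bounded inverse theorem, but exhibiting a two-sided bounded inverse is cleaner.
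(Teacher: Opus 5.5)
Your proposal is correct and matches the paper's proof: the paper also uses the bounded inverse theorem to get $\A^{-1}\in\L(\M)$, defines the same candidate $\B$, and states that a direct calculation gives $(\A+u\otimes v)\B=\B(\A+u\otimes v)=\I$. Your identity $(u\otimes v)\A^{-1}(u\otimes v)=\ipm{v}{\A^{-1}u}\,u\otimes v$ is exactly the omitted calculation, and it checks out for both the right and left inverse.
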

\begin{proof}
Since $A \in \L(\M)$ is a bijection, we know by the Banach Open Mapping Theorem 
that it has a bounded inverse. That is, $\A$ is invertible.  
Let 
\[
    \B \defeq \A^{-1} - \frac{\A^{-1}(u\otimes v)\A^{-1}}{1+\ipm{v}{\A^{-1}u}}.
\] 
Note that $\B \in \L(\M)$. 
A direct calculation yields 
$(\A+u\otimes v)\B = \B(\A+u\otimes v) = \I$.
\end{proof}

\begin{remark}
Lemma~\ref{lem:smw_basic} extends the
Sherman--Morrison--Woodbury formula for rank-one updates to a Hilbert space
setting. Such extensions have been considered in much more general settings;
see, e.g.,~\cite{Deng11}.
\end{remark}

The following result provides a convenient formula for $\Cinc{i}$.
\begin{proposition}
If $S \subseteq V$ and $i \notin S$, then
\begin{equation}\label{equ:Cpost_update}
\Cinc{i}(S) = 
\Cpost(S) - 
\frac{\Cprior^{1/2}\tilde z_i \otimes \Cprior^{1/2} \tilde{z}_i }{1 + \ipm{\ft_i}{\tilde z_i}}, 
\qquad
\tilde{z}_i \defeq (\I + \Ht(S))^{-1}\ft_i.
\end{equation}
\end{proposition}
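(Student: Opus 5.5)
The plan is to apply Lemma~\ref{lem:smw_basic} with $\A \defeq \I + \Ht(S)$ and $u = v = \ft_i$, and then conjugate the result by $\Cprior^{1/2}$.

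The first step is to check the hypotheses of Lemma~\ref{lem:smw_basic}. Since $\Ht(S) = \sum_{j\in S}\ft_j\otimes\ft_j$ is a finite sum of positive self-adjoint rank-one operators, we have $\Ht(S)\in\Lsymp(\M)$. Hence $\ipm{\A x}{x} \ge \|x\|^2$ for all $x \in \M$. This coercivity gives injectivity and closed range. Because $\A$ is self-adjoint, the orthogonal complement of its range is its kernel, which is trivial, so $\A$ is a bijection and Lemma~\ref{lem:smw_basic} applies once the scalar condition holds. By the same coercivity argument, $\A^{-1}$ is positive and self-adjoint. Therefore $\ipm{\ft_i}{\A^{-1}\ft_i} = \ipm{\ft_i}{\tilde z_i} \ge 0$, so the denominator $1+\ipm{\ft_i}{\tilde z_i}$ is at least $1$ and in particular nonzero.

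Lemma~\ref{lem:smw_basic} then gives
\[
(\I+\Ht(S)+\ft_i\otimes\ft_i)^{-1} = \A^{-1} - \frac{\A^{-1}(\ft_i\otimes\ft_i)\A^{-1}}{1+\ipm{\ft_i}{\tilde z_i}}.
\]
Using the identity $\A(u\otimes v)\B = (\A u)\otimes(\B^* v)$ together with $(\A^{-1})^* = \A^{-1}$, the numerator becomes $\tilde z_i \otimes \tilde z_i$.

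The final step is to multiply on the left and on the right by $\Cprior^{1/2}$, which is self-adjoint. By \eqref{equ:Crep}, the first term becomes $\Cpost(S)$. Applying the tensor identity once more, the second term becomes $(\Cprior^{1/2}\tilde z_i)\otimes(\Cprior^{1/2}\tilde z_i)$, which yields \eqref{equ:Cpost_update}.

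The only step that needs any care is the invertibility of $\A$ together with the positivity of the denominator, and both follow from the lower bound $\A \ge \I$. The rest is direct algebra.
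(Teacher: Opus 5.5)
Your proposal is correct and follows the paper's proof: apply Lemma~\ref{lem:smw_basic} to $\I+\Ht(S)$ with $u=v=\ft_i$, then multiply on both sides by $\Cprior^{1/2}$. Your check that $\I+\Ht(S)$ is a bijection and that the denominator is at least $1$ supplies hypotheses the paper leaves implicit.
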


\begin{proof}
Note that by Lemma~\ref{lem:smw_basic}
\[
(\I + \Ht(S) + \ft_i \otimes \ft_i)^{-1} = (\I + \Ht(S))^{-1} - 
\frac{(\I + \Ht(S))^{-1} \ft_i \otimes (\I + \Ht(S))^{-1} \ft_i}{1 + \ipm{\ft_i}{(\I + \Ht(S))^{-1} \ft_i}}.
\]
Applying $\Cprior^{1/2}$ to the expressions on each side 
yields
the desired result.
\end{proof}

The formula~\eqref{equ:Cpost_update} implies the following 
monotonicity property: 
\begin{equation}\label{equ:loewner}
\Cinc{i}(S) \preceq \Cpost(S),\quad \text{for each } S \subset 2^V. 
\end{equation}
Here, $\preceq$ denotes the Loewner order. Specifically, for $\A$ and $\B$ in
$\Lsymp(\M)$, $\A \preceq \B$ means $\B - \A \in \Lsymp(\M)$.
To see~\eqref{equ:loewner}, we simply note that for each $v \in \M$,
\[
 \ipm{v}{(\Cpost(S) - \Cinc{i}(S))v} = 
 \frac{\ipm{v}{\Cprior^{1/2}\tilde z_i}^2}{1 + \ipm{\ft_i}{\tilde z_i}} \geq 0.
\]
It is also immediate to note that, more generally, 
\[
\Cpost(T) \preceq \Cpost(S), \quad \text{whenever}\quad S \subset T. 
\]
Intuitively, this says that additional 
measurements can only reduce uncertainty. 

A more quantitative result can be obtained 
by considering the impact of adding 
a sensor on the trace of the posterior covariance operator.
Namely, 
the reduction in the average posterior variance 
due to adding the sensor indexed by 
$i \notin S$ is  
\[
\trace(\Cpost(S)) - \trace(\Cinc{i}(S))
= 
\frac{\|\Cprior^{1/2}\tilde z_i\|^2}{1 + \ipm{\ft_i}{\tilde z_i}}. 
\]
Analyzing the monotonicity of $\EIG$, which is considered in the next section, 
builds on similar arguments along with properties of the Fredholm determinant.

\subsection{The OED problem}\label{sec:eig_optim}
As noted previously, 
in the present linear Gaussian setting, the expected information 
admits the analytic expression~\eqref{equ:EIG_def}.
Ignoring the factor of $1/2$ in that definition,
we consider
\[
\pEIG := \log\det(\I + \Ht). 
\] 

We consider the OED problem of selecting an optimal 
subset of the candidate sensor locations, where the sensors will be placed,
such that the EIG is maximized. The 
representation of $\Ht$ in~\eqref{equ:Ht_rep} indicates that 
the OED problem can be formulated as that of selecting an 
optimal subset of $\{ \ft_i\}_{i=1}^\Ns$. 
To make matters concrete, let $V = \{1, \ldots, \Ns\}$ index the set of 
candidate sensor locations. We have, 
\begin{equation}\label{equ:EIGdef}
\pEIG(S) \defeq \log\det(\I + \Ht(S)), \quad S \in 2^V. 
\end{equation} 
Subsequently, the OED problem is formulated as 
\begin{equation}\label{equ:optim_binary_eig}
\begin{aligned}
&\max_{S \subset V} \pEIG(S), \\
&\text{subject to } |S| = k.
\end{aligned}
\end{equation}

\section{The expected information gain and its submodularity}\label{sec:main}
In this section, we prove that $\pEIG$ is a monotone submodular function.
We first consider the following technical lemma, which is needed in what follows.
\begin{lemma}\label{lem:logdet}
Let 
$\K$ be a positive self-adjoint trace-class operator on $\M$. Then, for every $u \in \M$,
\[
\log\det(\I + \K + u \otimes u)
=\log\det(\I + \K)
+\log\left(1 + \|(\I + \K)^{-1/2}u\|^2\right).
\]
\end{lemma}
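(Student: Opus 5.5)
The plan is to factor $\I + \K + u\otimes u$ as a product of $\I+\K$ with a rank-one perturbation of the identity, and then use the multiplicative property~\eqref{equ:mult}. Since $\K \in \Lsymp(\M)$, the operator $\I+\K$ satisfies $\ipm{(\I+\K)x}{x} \ge \|x\|^2$. Hence it is a bijection with bounded inverse, and its square root $(\I+\K)^{1/2}$ (via the functional calculus) is self-adjoint and invertible with inverse $(\I+\K)^{-1/2}$. Setting $w \defeq (\I+\K)^{-1/2}u$ and using the identity $\A(u\otimes v)\B = (\A u)\otimes(\B^* v)$, we get
\[
\I + \K + u\otimes u = (\I+\K)^{1/2}\big(\I + w\otimes w\big)(\I+\K)^{1/2}.
\]

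Next we need the multiplicative property. The factors $(\I+\K)^{1/2}$ are of the form $\I + \K'$ with $\K' = (\I+\K)^{1/2} - \I$, and $\K'$ is trace class. Indeed, its eigenvalues are $\sqrt{1+\lambda_n}-1 \le \lambda_n/2$, so it is positive self-adjoint with summable eigenvalues. Also $w\otimes w$ is rank one, hence trace class. Applying~\eqref{equ:mult} twice, together with cyclicity of the determinant, gives
\[
\det(\I+\K+u\otimes u) = \det\big((\I+\K)^{1/2}\big)^2 \det(\I + w\otimes w) = \det(\I+\K)\,\det(\I+w\otimes w).
\]
Here $\det((\I+\K)^{1/2})^2 = \prod_n \sqrt{1+\lambda_n}^{\,2} = \det(\I+\K)$, which follows directly from the eigenvalue definition since $\K$ and $\K'$ share eigenvectors. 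Alternatively, one can avoid cyclicity by writing $\det(\A\B\A) = \det(\A)\det(\B)\det(\A)$, which is just~\eqref{equ:mult} applied twice.

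It remains to compute $\det(\I + w\otimes w)$. The operator $w\otimes w$ is positive self-adjoint with a single nonzero eigenvalue $\|w\|^2$ (eigenvector $w$, if $w\neq0$). So by definition $\det(\I+w\otimes w) = 1+\|w\|^2$. All factors are positive, so taking logarithms yields the claim with $\|w\|^2 = \|(\I+\K)^{-1/2}u\|^2$.

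The main obstacle is justifying~\eqref{equ:mult} in this setting. The paper defines the Fredholm determinant by eigenvalues only for positive self-adjoint $\K$, but the product $(\I+\K)^{1/2}(\I+w\otimes w)$ is not self-adjoint. I would handle this by appealing to the general (exterior-power) Fredholm determinant cited in the preliminaries, for which~\eqref{equ:mult} holds for arbitrary trace-class perturbations and which agrees with the eigenvalue product in the self-adjoint case. As a fallback that stays within self-adjoint operators, one could instead use the Sylvester-type identity $\det(\I + \A\B) = \det(\I+\B\A)$. Alternatively, one could reduce to finite dimensions: restrict to the invariant subspaces spanned by finitely many eigenvectors of $\K$ together with $u$, apply the matrix determinant lemma there, and pass to the limit using continuity of $\det$ in trace norm.
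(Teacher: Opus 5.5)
Your argument is correct; it differs from the paper's only in the choice of factorization.

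\textbf{The paper's route.} It writes $\I+\K+u\otimes u = (\I+\K)\bigl(\I + (\I+\K)^{-1}u\otimes u\bigr)$ and applies \eqref{equ:mult} once. It then uses that the rank-one operator $(\I+\K)^{-1}u\otimes u$ has the single nonzero eigenvalue $\ipm{(\I+\K)^{-1}u}{u}$, and only at the end rewrites this as $\|(\I+\K)^{-1/2}u\|^2$.

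\textbf{Your route.} You factor symmetrically as $(\I+\K)^{1/2}(\I+w\otimes w)(\I+\K)^{1/2}$. This costs a little more: you must check that $(\I+\K)^{1/2}-\I$ is trace class, apply \eqref{equ:mult} twice, and verify $\det((\I+\K)^{1/2})^2=\det(\I+\K)$.

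\textbf{What each buys.} In your version the rank-one factor $\I+w\otimes w$ is positive self-adjoint, so $\det(\I+w\otimes w)=1+\|w\|^2$ follows directly from the eigenvalue definition in the preliminaries. In the paper's one-sided version, $(\I+\K)^{-1}u\otimes u$ is not self-adjoint in general. Evaluating $\det(\I+(\I+\K)^{-1}u\otimes u)$ through its eigenvalue therefore tacitly uses the general (exterior-power) determinant and its agreement with the eigenvalue product.

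Neither route avoids the general determinant entirely, since both apply \eqref{equ:mult} to non-self-adjoint products. Your resolution of the ``main obstacle'' (appealing to the general Fredholm determinant, for which \eqref{equ:mult} holds) is exactly what the paper relies on, so your fallbacks are unnecessary. The finite-dimensional fallback would also need adjusting as stated. The span of finitely many eigenvectors of $\K$ together with $u$ is generally not invariant under $\K$. You would instead truncate both $\K$ and $u$ with spectral projections and pass to the limit in trace norm.
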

\begin{proof}
Let $\B = \I + \K$.
Note that $\B$ is an invertible operator and
$\B + u \otimes u = \B\bigl(\I + \B^{-1}u \otimes u\bigr)$.
By the multiplicative property~\eqref{equ:mult},
$\det(\B + u \otimes u)
=\det(\B (\I + \B^{-1}u \otimes u ))
=
\det(\B)\,
\det\bigl(\I + \B^{-1}u \otimes u\bigr)$.
Next, note that 
the rank-one operator $\B^{-1}u \otimes u$ has a single nonzero eigenvalue
$\ipm{\B^{-1}u}{u}$. Hence,
\[
\det\bigl(\I + \B^{-1}u \otimes u\bigr)
=
1 + \ipm{\B^{-1}u}{u} 
= 
1 + \ipm{\B^{-1/2}u}{\B^{-1/2} u}.
\]
Therefore,
\[
\det(\B + u \otimes u)
=
\det(\B)\,
\bigl(1 + \|\B^{-1/2}u\|^2\bigr).
\]
The result follows by taking the log of both sides.
\end{proof}

We are now ready to establish monotonicity and submodularity of the 
expected information gain for 
infinite-dimensional Gaussian linear Bayesian inverse problems.
\begin{theorem}\label{thm:submodularity}
The function $\pEIG$ in~\eqref{equ:EIGdef} is monotone and submodular.
\end{theorem}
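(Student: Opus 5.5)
The plan is to compute the marginal gain explicitly using Lemma~\ref{lem:logdet} and then compare gains via the Sherman--Morrison--Woodbury formula of Lemma~\ref{lem:smw_basic}.

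\emph{Marginal gain.} Fix $S \subseteq V$ and $v \notin S$, and set $\K = \Ht(S)$. This operator is positive, self-adjoint and of finite rank, hence trace-class. Since $\Ht(S \cup \{v\}) = \Ht(S) + \ft_v \otimes \ft_v$, Lemma~\ref{lem:logdet} gives
\[
\deriv{\pEIG}{S}{v} = \log\bigl(1 + \ipm{(\I + \Ht(S))^{-1}\ft_v}{\ft_v}\bigr).
\]
Because $(\I+\Ht(S))^{-1} \in \Lsymp(\M)$, the argument of the logarithm is at least $1$. Hence the marginal gain is nonnegative, which proves monotonicity.

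\emph{Reduction to a Loewner inequality.} For submodularity I use the one-element criterion \eqref{equ:incr-submodularity}. Fix $S$ and distinct $v, w \notin S$, and write $\B = \I + \Ht(S)$. Since $\log$ is increasing, it suffices to show
\[
\ipm{(\B + \ft_w\otimes\ft_w)^{-1}\ft_v}{\ft_v} \le \ipm{\B^{-1}\ft_v}{\ft_v}.
\]
To apply Lemma~\ref{lem:smw_basic}, I first check its hypotheses.
\begin{itemize}
\item $\B$ is a bijection: it is self-adjoint and bounded below by $\I$, so it is injective with closed range, and the orthogonal complement of its range is its kernel, which is trivial.
\item $1 + \ipm{\ft_w}{\B^{-1}\ft_w} \ge 1 > 0$, so the denominator does not vanish.
\end{itemize}
The lemma then yields
\[
(\B + \ft_w\otimes\ft_w)^{-1} = \B^{-1} - \frac{\B^{-1}\ft_w \otimes \B^{-1}\ft_w}{1 + \ipm{\ft_w}{\B^{-1}\ft_w}}.
\]
Here I use the identity $\A(u\otimes v)\B' = (\A u)\otimes(\B'^* v)$ together with the self-adjointness of $\B^{-1}$.

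\emph{Conclusion.} Pairing the formula above with $\ft_v$ gives
\[
\ipm{(\B + \ft_w\otimes\ft_w)^{-1}\ft_v}{\ft_v} = \ipm{\B^{-1}\ft_v}{\ft_v} - \frac{\ipm{\B^{-1}\ft_w}{\ft_v}^2}{1 + \ipm{\ft_w}{\B^{-1}\ft_w}},
\]
and the subtracted term is nonnegative. This is the same Loewner-order argument as in \eqref{equ:loewner}, applied at the level of $(\I+\Ht)^{-1}$, and it establishes $\deriv{\pEIG}{S}{v} \ge \deriv{\pEIG}{S\cup\{w\}}{v}$.

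The only genuinely delicate step is justifying Lemma~\ref{lem:logdet}, in particular the multiplicativity of the Fredholm determinant. That lemma is already established, so the main remaining care point is verifying the invertibility hypotheses for $\B$ listed above; the rest is a direct computation.
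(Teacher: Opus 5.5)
Your proposal is correct and follows essentially the same route as the paper. The marginal gain comes from Lemma~\ref{lem:logdet} as $\log\bigl(1+\ipm{(\I+\Ht(S))^{-1}\ft_v}{\ft_v}\bigr)$, and submodularity follows from the rank-one formula of Lemma~\ref{lem:smw_basic} applied to $\I+\Ht(S)+\ft_w\otimes\ft_w$, which subtracts the nonnegative term $\ipm{(\I+\Ht(S))^{-1}\ft_w}{\ft_v}^2/\bigl(1+\ipm{(\I+\Ht(S))^{-1}\ft_w}{\ft_w}\bigr)$; you also check explicitly that $\I+\Ht(S)$ is a bijection and that the denominator is at least $1$, which the paper leaves implicit.
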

\begin{proof}
To establish monotonicity, 
it is sufficient to prove that for each $S \in 2^V$,
$\deriv{\pEIG}{S}{r} \geq 0$, for every $r \in V \setminus S$.
Let $S \subseteq V$. By Lemma~\ref{lem:logdet},
for each $r \in V \setminus S$, 
\begin{align}
    \deriv{\pEIG}{S}{r} &= 
    \pEIG(S \cup \{r\}) - \pEIG(S) \notag \\
    &= \log\det(\I + \Ht(S) + \ft_r \otimes \ft_r) - \log\det(\I + \Ht(S)) \notag\\
    &=
    \log\big(1 + \|(\I+\Ht(S))^{-1/2}\ft_r\|^2\big) \geq 0.\label{equ:gain} 
\end{align} 

We next prove $\pEIG$ is submodular. As noted previously, 
this can be done incrementally; cf.~\eqref{equ:incr-submodularity}. Thus, we 
show that 
if $S \subseteq V$ and $r, t$ are distinct elements of $V \setminus S$, then 
\[
\pEIG(S \cup \{ r \}) - \pEIG(S) \geq \pEIG(S \cup \{ r, t\}) - \pEIG(S \cup \{ t\}).
\]
Let $\A = \I + \Ht(S)$ and define 
\[
\upalpha_{ij} \defeq \ipm{\A^{-1}\ft_i}{\ft_j}, \quad i, j \in \{1, \ldots, \Ns\}.
\]
Using this notation and~\eqref{equ:gain}, we have  
\begin{equation}\label{equ:eig_gain_rr}
 \deriv{\pEIG}{S}{r} = \pEIG(S \cup \{r\}) - \pEIG(S)
= \log( 1 + \arr).
\end{equation}
Next, 
note that 
\begin{equation*} 
    \det(\A + \ft_r \otimes \ft_r + \ft_t \otimes \ft_t ) = 
    \big(1 + \ipm{(\A + \ft_t \otimes \ft_t)^{-1}\ft_r}{\ft_r} \big) \det(\A + \ft_t \otimes \ft_t). 
\end{equation*}
Now, by Lemma~\ref{lem:smw_basic},
\begin{equation*} 
(\A+\ft_t \otimes\ft_t)^{-1}
    =\A^{-1} - \frac{(\A^{-1}\ft_t) \otimes (\A^{-1}\ft_t)} 
    {1+\ipm{\A^{-1}\ft_t}{\ft_t}}.
\end{equation*}
Thus, 
\begin{equation}\label{equ:diminishing_returns}
    \ipm{(\A + \ft_t \otimes \ft_t)^{-1}\ft_r}{\ft_r} = \arr - \frac{\art^2}{1 + \att}.
\end{equation}
Therefore, we have 
\[
    0 \leq \det(\A + \ft_r \otimes \ft_r + \ft_t \otimes \ft_t)
    = 
    (1 + \arr - \art^2/(1+\att))\det(\A + \ft_t \otimes \ft_t).
\]
Hence, 
\[
\begin{aligned}
    \pEIG&(S \cup \{ r, t\}) - \pEIG(S \cup \{ t\}) \\
        &= \log\det(\A + \ft_r \otimes \ft_r + \ft_t \otimes \ft_t ) - \log\det(\A + \ft_t \otimes \ft_t)\\
        &= \log(1 + \arr - \art^2/(1+\att))
        \\
        &\leq \log(1 + \arr) \\
        &= \pEIG(S \cup \{r\}) - \pEIG(S). \qedhere
\end{aligned}
\]
\end{proof}

\begin{remark}
Note also that $\pEIG(\emptyset) = 0$, i.e., $\pEIG$ is a normalized function.
\end{remark}

\begin{remark}
The quantity
$\alpha_{rt}^2/(1+\alpha_{tt})$
in~\eqref{equ:diminishing_returns} plays an important role in proving submodularity 
of $\pEIG$.  In the context of sensor placement, 
this quantity explains the reduction in  
the marginal gain from sensor $r$
due to first activating sensor $t$.
\end{remark}

\begin{remark}\label{rmk:submodularity_loss}
The submodularity property of $\pEIG$ does not extend to general linear Gaussian
problems with correlated measurement errors.  This is well-known in the
finite-dimensional setting~\cite{HuanJagalurMarzouk24}.  Intuitively, correlated
measurement errors introduce interactions between sensors; cf.\ 
Remark~\ref{rmk:correlated_Hessian}.  Thus, adding a sensor can increase the
information content of already active sensors, hence violating submodularity. 
\end{remark}

\begin{remark}
In view of Remark~\ref{rmk:submodularity_loss}, 
one might be tempted to apply a whitening
transformation~\cite[Page 80]{KaipioSomersalo05} to the observation
model~\eqref{equ:model} to obtain an identity as the error covariance
matrix.  Namely, applying $\ncov^{-1/2}$ to both sides
of~\eqref{equ:model}, we obtain 
\[ \bar{y} = \bar\F m + \bar\eta, \] with  $\bar
y = \ncov^{-1/2} y$, $\bar\F = \ncov^{-1/2}\F$, and $\bar\eta = \ncov^{-1/2}\eta
\sim \GM{0}{\I}$. Note, however, that in this case the entries of $\bar y$ are
no longer associated with specific sensors.  Thus, while applying a whitening
transformation results in diagonal noise covariance matrix, it fundamentally
changes the structure of the combinatorial problem: in this case sensor
selection is not equivalent to picking a subset of the entries of $\bar y$.
\end{remark}

\section{Computational considerations}\label{sec:lazy_dopt}
Consider the greedy method in Algorithm~\ref{alg:greedy_submodular} applied 
to the problem~\eqref{equ:optim_binary_eig}. The major 
cost is incurred when computing the marginal gains in line 3 of the algorithm.
Specifically, at the $l$th step of the algorithm, we need to compute 
\begin{equation}\label{equ:EIG_gain}
\deriv{\pEIG}{S_{l-1}}{v}, \quad \text{for every } v \in V \setminus S_{l-1}. 
\end{equation}
As noted in the proof of Theorem~\ref{thm:submodularity} (cf.~\eqref{equ:eig_gain_rr}),
\begin{equation}\label{equ:marginal_gain}
\deriv{\pEIG}{S_{l-1}}{v} = \log\big(1 + \ipm{(\I + \Ht(S_{l-1}))^{-1}\ft_v}{\ft_v}\big).
\end{equation} 

In problems where $d$ is not too large, and we can afford to precompute and 
store $\{\ft_i\}_{i=1}^\Ns$, the greedy procedure can be carried out without any
PDE solves.  Such a precomputation step requires computing $\Cprior^{1/2}\F^*
e_i$, $i \in \{1, \ldots, \Ns\}$. In inverse problems governed by PDEs, this
amounts to $\Ns$ adjoint PDE solves. In some ill-posed inverse problem 
$\F$ might have a rapidly decaying spectrum, and the numerical rank of 
$\F$ might be considerably smaller than $d$, especially if $d$ is large. 
The smoothing properties of $\Cprior$ typically result in 
faster spectral decay for $\F \Cprior^{1/2}$; 
see~\cite{AlexanderianPetraStadlerEtAl14,AlexanderianSaibaba18}. In such cases, 
this operator can be approximated accurately and efficiently 
by a low-rank approximation. Hence, 
one can accelerate computations further by precomputing a low-rank approximation 
of $\F \Cprior^{1/2}$, which can then be used to compute highly accurate 
approximations of $\{\ft_i\}_{i=1}^\Ns$. This approach is very much related 
to developments in~\cite{AlexanderianPetraStadlerEtAl14,AlexanderianSaibaba18}
that considered A- and D-optimal design of experiments for infinite-dimensional 
Bayesian linear inverse problems, albeit within a convex relaxation 
and gradient-based optimization framework. 

We note, however, that even with the simplification considered above, the
cumulative cost of applying $(\I + \Ht(S_{l-1}))^{-1}$ to elements of $V
\setminus S_{l-1}$ can be high, if $\Ns$ is large and many greedy steps are
needed.  In Section~\ref{sec:inside_out}, we present an alternative formula for
the marginal gain that involves the inverse of an operator on the measurement
space.  This is beneficial in many practical inverse problems in which, upon
discretization, the discretized parameter dimension is very large.

In Section~\ref{sec:lazy}, we consider an efficient 
implementation of the greedy method for maximizing the EIG---the \emph{lazy} greedy 
procedure. While lazy greedy is well-known in computer science and discrete 
optimization~\cite{KrauseGolovin14,Minoux78}, it remains underutilized in the
PDE-constrained optimization and inverse problems communities. Combined with the
idea of low-rank approximation discussed earlier and/or the
measurement-space approach considered in Section~\ref{sec:inside_out}, this can
lead to highly efficient implementations of the greedy method for finding
near optimal sensor placements.

\subsection{A measurement space approach to computing the marginal gains}\label{sec:inside_out}
Let $S = \{s_1, \ldots, s_k\} \subset V$. Computing the marginal 
gains needed within the greedy procedure requires computing expressions of the 
form 
$(\I + \Ht(S))^{-1}m$, for $m \in \M$. Here, we present a ``measurement space approach''
for computing such expressions that involves only inverses of operators 
on $\R^k$. 
It is straightforward to 
note that $\Ht(S) = \Ft(S)^* \Ft(S)$, where $\Ft(S):\M \to \R^k$ is defined by 
\[
    \Ft(S) m \defeq 
    \begin{bmatrix} 
        \ipm{\ft_{s_1}}{m} \\ \vdots \\\ipm{\ft_{s_k}}{m}
    \end{bmatrix}.
\]
Thus, $(\I + \Ht(S))^{-1}m = (\I + \Ft(S)^*\Ft(S))^{-1}m$.
The following 
technical result, which 
provides a Sherman--Morrison--Woodbury-like formula in a Hilbert 
space setting, provides an efficient way of computing such 
expressions.  
\begin{lemma}\label{lem:basic}
Let $\G:\M \to \mathcal{K}$ be a bounded linear transformation between 
Hilbert spaces $\M$ and $\mathcal{K}$. Then 
$(\I+\G^*\G)^{-1} = \I - \G^*(\I+\G\G^*)^{-1}\G$. 
\end{lemma}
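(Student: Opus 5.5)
Both $\I+\G^*\G$ and $\I+\G\G^*$ are invertible, and the candidate $\B\defeq \I-\G^*(\I+\G\G^*)^{-1}\G$ is a two-sided inverse of $\I+\G^*\G$, checked by direct multiplication as in Lemma~\ref{lem:smw_basic}.

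\emph{Invertibility.} $\G^*\G\in\Lsymp(\M)$ and $\G\G^*\in\Lsymp(\mathcal{K})$, since $\ipm{\G^*\G u}{u}=\|\G u\|^2\ge 0$, and similarly for $\G\G^*$. Hence, for $\A=\I+\G^*\G$ (and likewise for $\I+\G\G^*$), we have $\ipm{\A u}{u}\ge\|u\|^2$. This gives $\|\A u\|\ge\|u\|$, so $\A$ is injective with closed range. Its range is also dense, because $\Range(\A)^\perp=\ker(\A^*)=\ker(\A)=\{0\}$. Therefore $\A$ is a bijection, and by the Banach Open Mapping Theorem it is invertible in $\L(\M)$. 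The same argument applies on $\mathcal{K}$, so $(\I+\G\G^*)^{-1}$ exists and is bounded, and $\B\in\L(\M)$.

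\emph{Verification.} The key identity is the push-through relation $\G(\I+\G^*\G)=(\I+\G\G^*)\G$. Multiplying on the left by $(\I+\G\G^*)^{-1}$ and on the right by $(\I+\G^*\G)^{-1}$ gives
\[
(\I+\G\G^*)^{-1}\G=\G(\I+\G^*\G)^{-1}.
\]
For the right inverse, expand
\[
(\I+\G^*\G)\B=\I+\G^*\G-(\I+\G^*\G)\G^*(\I+\G\G^*)^{-1}\G .
\]
Since $(\I+\G^*\G)\G^*=\G^*(\I+\G\G^*)$, the last term equals $\G^*\G$, so the product is $\I$. For the left inverse, expand
\[
\B(\I+\G^*\G)=\I+\G^*\G-\G^*(\I+\G\G^*)^{-1}\G(\I+\G^*\G).
\]
By the push-through relation, $(\I+\G\G^*)^{-1}\G(\I+\G^*\G)=\G$, so the last term again equals $\G^*\G$ and the product is $\I$.

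\emph{Main obstacle.} The only step that is not purely algebraic is the invertibility of $\I+\G\G^*$ on a possibly infinite-dimensional $\mathcal{K}$, and it is handled by the coercivity-plus-density argument above. The algebraic verification is routine. In the application $\mathcal{K}=\R^k$, so this issue is trivial there anyway.
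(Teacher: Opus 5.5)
Your proof is correct and follows the same route as the paper: establish invertibility of $\I+\G^*\G$ and $\I+\G\G^*$, then verify by direct multiplication. Your identity $(\I+\G^*\G)\G^*=\G^*(\I+\G\G^*)$ is exactly the regrouping the paper uses in its one-sided computation. You additionally justify the invertibility that the paper calls straightforward (via coercivity and density of the range), and you check the left inverse, which is harmless but redundant once $\I+\G^*\G$ is known to be invertible.
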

\begin{proof}
See Appendix~\ref{appdx:lemma_proof}.
\end{proof}
Using this result, we have that for each $m \in \M$,
\[
\begin{aligned}
(\I + \Ht(S))^{-1}m 
&= (\I + \Ft(S)^* \Ft(S))^{-1}m \\
&= m - \Ft(S)^*(\I + \Ft(S)\Ft(S)^*)\Ft(S) m.
\end{aligned}
\]
Thus, for each $m \in \M$, 
\[
\begin{aligned}
\ipm{(\I + \Ft(S)^* \Ft(S))^{-1}m}{m} &= \ipm{m - \Ft(S)^*(\I + \Ft(S)\Ft(S)^*)^{-1}\Ft(S) m}{m} \\
&= \| m\|^2 - \ipm{(\I + \Ft(S)\Ft(S)^*)^{-1}\Ft(S) m}{\Ft(S) m}.
\end{aligned}
\]
These calculations lead to an alternate formula for computing 
the marginal gains in~\eqref{equ:marginal_gain}. Namely, for $S \subset V$, 
where $S$ has cardinality $1 \leq k < \Ns$, 
\begin{align}
\deriv{\pEIG}{S}{v} &= 
\log\big(1 + \ipm{(\I + \Ht(S))^{-1}\ft_v}{\ft_v}\big) \label{equ:marginal_gain_orig}\\
&=
\log\big(1 + \| \ft_v\|^2 - \ipm{(\I + \Ft(S)\Ft(S)^*)^{-1}\Ft(S) \ft_v}{\Ft(S) \ft_v} \big), 
\label{equ:marginal_gain_alt}
\end{align}
for all $v \in V \setminus S$. The advantage of \eqref{equ:marginal_gain_alt} 
over \eqref{equ:marginal_gain_orig}
is that it involves the inverse of an operator on 
$\R^k$ as opposed to the inverse of an operator on $\M$. 

The present measurement-space formulation for computing the
marginal gain in $\pEIG$ is related to a similar approach in  
\cite{AlexanderianPetraStadlerEtAl21}.  In that article, a measurement-space
approach was used to compute a variant of the Bayesian A-optimality criterion,
in the context of OED under model uncertainty and in a discretized
framework.

\subsection{Exploiting submodularity via a lazy greedy procedure}\label{sec:lazy}
The lazy greedy method~\cite{Minoux78} uses submodularity to reduce the number
of marginal gain evaluations.  Note that
by submodularity of $\pEIG$, 
\[
\deriv{\pEIG}{S_{l-1}}{v} \geq \deriv{\pEIG}{S_{l}}{v}, \quad l \geq 1.
\]
This suggests that instead of naively computing all the
requisite marginal gains in~\eqref{equ:EIG_gain} at each step of the greedy
algorithm, we can use the already computed marginal gains as an upper bound for
the subsequent ones. This is the central idea behind the lazy greedy method. 

Letting $S_0 = \emptyset$, the first iteration of the lazy greedy method is 
the same as standard greedy: we compute 
\[
\dhat(v) \defeq \deriv{\pEIG}{S_{0}}{v} = \pEIG(\{v\}),
\]
for every $v \in V$. Then, we choose $s_1 \in V$ that maximizes $\dhat(v)$ and
let $S_1 = \{s_1\}$. The first step of the lazy greedy method is concluded by 
sorting the values of $\{\dhat(v)\}_{v \in V \setminus S_1}$ in descending
order.  The sorted set of marginal gains will be maintained and updated in the
subsequent iterations of the lazy greedy method.  
\begin{figure}[htbp]
\centering
\fbox{
    \begin{minipage}{1\textwidth}
\begin{enumerate}[label=(\roman*)]
\smallskip
\item Choose $v^* \in V \setminus S_{l-1}$ according to
\[
v^* \in \argmax_{v \in V \setminus S_{l-1}} \dhat(v);
\]
note that $\{ \dhat(v)\}_{v \in V \setminus S_{l-1}}$
are available from step $l-1$.
\smallskip
\item Compute $\deriv{\pEIG}{S_{l-1}}{v^*}$ and update
\[
\dhat(v^*) = \deriv{\pEIG}{S_{l-1}}{v^*}.
\]
\item If $v^*$ still maximizes $\{\dhat(v)\}_{v \in V \setminus S_{l-1}}$,
then let $S_l = S_{l-1}\cup\{v^*\}$ and go to step $l+1$ of the
lazy greedy procedure; otherwise, go to (i).
\smallskip
\end{enumerate}
    \end{minipage}
}
\caption{One step of the lazy greedy algorithm.} 
\label{fig:lazy}
\end{figure}
The $l$th step of the lazy greedy method is outlined in Figure~\ref{fig:lazy}. 
Note that since $\pEIG$ is submodular, the previously computed marginal gains
provide upper bounds on future marginal gains.  This justifies the
acceptance criterion in step (iii).

The lazy greedy method preserves the approximation guarantee of the standard
greedy algorithm.  In practice, lazy evaluations often yield
substantial reductions in computational cost by avoiding unnecessary marginal
gain evaluations; see, e.g.,~\cite{BadanidiyuruVondrak2014,KrauseSinghGuestrin08,LeskovecKrauseGuestrinEtAl07}.  
Modern submodular optimization libraries often implement lazy greedy as a standard
option~\cite{SchreiberBilmesNoble2020,KaushalRamakrishnanIyer2022}.  
One can further reduce the number of 
marginal gain evaluations by combining lazy evaluations with 
the stochastic greedy method~\cite{MirzasoleimanBadanidiyuruKarbasiEtAl15}.

\subsection{Improving the greedy solution}\label{sec:exchange-algorithms}
In practice, one can utilize an exchange algorithm~\cite{Federov1972:1} after
the greedy algorithm to further improve the solution.  An exchange algorithm
takes an existing solution and swaps out elements if doing so improves the
quality of the solution.  Usually the swap that yields the largest improvement
in the objective value is made.  The exchange algorithm terminates if no swap
improves the objective value.  The procedure is outlined in
Algorithm~\ref{alg:exchange_algorithm}.  Note that because a swap is only made
if it improves the solution, we preserve the approximation
guarantee~\eqref{eq:approximation-ratio}.  
See~\cite{LauZhou:2022:1} for the analysis of the exchange algorithm for a
related D-optimal design problem.

\begin{algorithm}[H]
\caption{Exchange Algorithm}
\label{alg:exchange_algorithm}
\begin{algorithmic}[1]
\REQUIRE Ground set $V$, set function $f : 2^V \to \R$, and initial set $S_0\subset V$ with $|S_0|=k$
\ENSURE Improved solution $S$
\STATE $l = 0$
\WHILE{$\exists u \in S_l,\ v \in V\setminus S_l$ such that
$f((S_l\cup\{v\})\setminus\{u\})>f(S_l)$}
\STATE $\displaystyle
(u^\ast,v^\ast)
= \argmax_{u \in S_l,\ v \in V \setminus S_l}
f((S_l\cup\{v\})\setminus\{u\})$
\STATE $S_{l+1} = (S_l \cup \{v^\ast\})\setminus\{u^\ast\}$
\STATE $l = l + 1$
\ENDWHILE
\RETURN $S = S_l$
\end{algorithmic}
\end{algorithm}

\section{Conclusion}\label{sec:conc}
We have proven the submodularity of the expected information
gain (EIG) for infinite-dimensional Bayesian linear inverse problems. This
provides a rigorous foundation for structure-exploiting computational
methods for greedy sensor placement, as detailed in our discussion 
of computational considerations. 

Extending the result on submodularity of the EIG to the infinite-dimensional setting
implies that the approximation guarantee of the greedy method for maximizing the
EIG for the class of inverse problems under study is discretization invariant.  
Specifically, the approximation guarantee holds in the infinite-dimensional
setting and is not merely a byproduct of discretization.  This is also relevant
to variants of the greedy algorithm for OED problems with more complex
constraints, such as matroid or knapsack constraints.  

The proof of submodularity relies on a sensor-wise decomposition of the
prior-preconditioned data misfit Hessian into rank-one terms, each corresponding
to an individual sensor. This decomposition was possible due to the assumption
of uncorrelated measurement errors.  Another important insight was reducing the
proof of submodularity to the analysis of one-element increments.

Our discussion of computational considerations indicates that leveraging the
structural properties of the inverse problem and the submodularity of the EIG
leads to highly efficient implementations of the greedy procedure. This has
broad implications for optimal sensor placement in infinite-dimensional Bayesian
linear inverse problems governed by PDEs.  Algorithms for fast sensor placement
are particularly critical in emerging computational paradigms such as digital
twins~\cite{Asch22}, where real-time decision-making and model updates are essential.

The present study is also relevant to nonlinear inverse problems, where a 
common approach is to use linearized formulations. Namely, by 
linearizing the parameter-to-observable map about a preliminary parameter estimate, 
the problem may be reduced to a setting for which the approximation 
guarantee of greedy sensor placement holds. This provides a principled 
approach for adaptive OED strategies, where the sensor 
network is iteratively refined as the parameter estimates are updated.

\section*{Acknowledgments} 
We would like to thank the two anonymous referees, whose thoughtful suggestions 
resulted in a much improved article. 
\bibliographystyle{AIMS} 
\bibliography{refs}

@article{DashtiLawStuart13,
  title={MAP estimators and their consistency in {B}ayesian nonparametric inverse problems},
  author={Dashti, Masoumeh and Law, Kody JH and Stuart, Andrew M and Voss, Jochen},
  journal={Inverse Problems},
  volume={29},
  number={9},
  pages={095017},
  year={2013},
  publisher={IOP Publishing}
}

@book{Ucinski05,
    AUTHOR = {Uci\'{n}ski, Dariusz},
     TITLE = {Optimal Measurement Methods for Distributed Parameter System
              Identification},
    SERIES = {Systems and Control Series},
 PUBLISHER = {CRC Press, Boca Raton, FL},
      YEAR = {2005},
     PAGES = {xviii+371},
      ISBN = {0-8493-2313-4},
   MRCLASS = {93-02 (62M09 62M30 62M45 90B80 93C20 93E12 94A12)},
  MRNUMBER = {2465165},
MRREVIEWER = {Ewaryst Rafaj\l owicz},
}

@article{Kullback:1951,
  title={On information and sufficiency},
  author={Kullback, S. and Leibler, R.A.},
  journal={The Annals of Mathematical Statistics},
  volume={22},
  number={1},
  pages={79--86},
  year={1951},
}

@article{AlexanderianGloorGhattas16,
  Title                    = {On {B}ayesian {A}-and {D}-optimal experimental designs in infinite dimensions},
  Author                   = {Alen Alexanderian and Philip J. Gloor and Omar Ghattas},
  Journal                  = {Bayesian Analysis},
  Year                     = {2016},
  Volume                   = {11},
  Number                   = {3},
  Pages                    = {671--695},
  Doi                      = {10.1214/15-BA969},
}

@article{Alexanderian21,
  title={Optimal experimental design for infinite-dimensional {B}ayesian inverse problems governed by {PDEs}: A review},
  author={Alexanderian, Alen},
  journal={Inverse Problems},
  volume={37},
  number={4},
  pages={043001},
  year={2021},
  publisher={IOP Publishing}
}

@article{AlexanderianSaibaba18,
  title={Efficient {D}-optimal design of experiments for infinite-dimensional {B}ayesian linear inverse problems},
  author={Alexanderian, Alen and Saibaba, Arvind K.},
  journal={SIAM Journal on Scientific Computing},
  volume={40},
  number={5},
  pages={A2956--A2985},
  year={2018},
  publisher={SIAM}
}

@article{AlexanderianPetraStadlerEtAl14,
  Title                    = {{A}-optimal design of experiments for infinite-dimensional {B}ayesian linear inverse problems with regularized $\ell_0$-sparsification},
  Author                   = {Alen Alexanderian and Noemi Petra and Georg Stadler and Omar Ghattas},
  Journal                  = {SIAM Journal on Scientific Computing},
  Year                     = {2014},
  Number                   = {5},
  Pages                    = {A2122--A2148},
  Volume                   = {36},
  Doi                      = {10.1137/130933381}
}

@article{KrauseGolovin14,
  title={Submodular function maximization.},
  author={Krause, Andreas and Golovin, Daniel},
  journal={Tractability},
  volume={3},
  number={71-104},
  pages={3},
  year={2014}
}

@inproceedings{LeskovecKrauseGuestrinEtAl07,
  author    = {Leskovec, Jure and Krause, Andreas and Guestrin, Carlos and
               Faloutsos, Christos and VanBriesen, Jeanne and Glance, Natalie},
  title     = {Cost-Effective Outbreak Detection in Networks},
  booktitle = {Proceedings of the 13th ACM SIGKDD International Conference on
               Knowledge Discovery and Data Mining},
  pages     = {420--429},
  year      = {2007},
  publisher = {ACM},
  doi       = {10.1145/1281192.1281239},
}

@InProceedings{Minoux78,
author="Minoux, Michel",
editor="Stoer, J.",
title="Accelerated Greedy Algorithms for Maximizing Submodular Set Functions",
booktitle="Optimization Techniques",
year="1978",
publisher="Springer Berlin Heidelberg",
pages="234--243"}

@article{NemhauserWolseyFisher78,
  title={An analysis of approximations for maximizing submodular set functions—{I}},
  author={Nemhauser, George L and Wolsey, Laurence A and Fisher, Marshall L},
  journal={Mathematical Programming},
  volume={14},
  pages={265--294},
  year={1978},
  publisher={Springer}
}

@article{LauZhou:2022:1,
  title={A local search framework for experimental design},
  author={Lau, Lap Chi and Zhou, Hong},
  journal={SIAM Journal on Computing},
  volume={51},
  number={4},
  pages={900--951},
  year={2022},
  publisher={SIAM}
}

@inproceedings{MirzasoleimanBadanidiyuruKarbasiEtAl15,
  title={Lazier than lazy greedy},
  author={Mirzasoleiman, Baharan and Badanidiyuru, Ashwinkumar and Karbasi, Amin and Vondr{\'a}k, Jan and Krause, Andreas},
  booktitle={Proceedings of the AAAI Conference on Artificial Intelligence},
  year={2015}
}

@InProceedings{ShamaiahBanerjeeVikalo10,
author={Shamaiah, Manohar and Banerjee, Siddhartha and Vikalo, Haris},
booktitle={49th IEEE Conference on Decision and Control (CDC)}, 
title={Greedy sensor selection: Leveraging submodularity}, 
year={2010},
pages={2572--2577},
doi={10.1109/CDC.2010.5717225}}

@book{Schrijver03,
    address = {Berlin Heidelberg New York},
    series = {Algorithms and combinatorics},
    title = {Combinatorial Optimization: Polyhedra and Efficiency},
    isbn = {978-3-540-44389-6},
    shorttitle = {Combinatorial optimization},
    language = {eng},
    number = {24},
    publisher = {Springer},
    author = {Schrijver, Alexander},
    year = {2003},
}

@article{KrauseSinghGuestrin08,
  author  = {Krause, Andreas and Singh, Ajit and Guestrin, Carlos},
  title   = {Near-Optimal Sensor Placements in {G}aussian Processes: Theory, Efficient Algorithms and Empirical Studies},
  journal = {Journal of Machine Learning Research},
  volume  = {9},
  pages   = {235--284},
  year    = {2008},
  doi     = {10.5555/1390681.1390689},
}

@article{KelmansKimelfeld83,
  title={Multiplicative submodularity of a matrix's principal minor as a function of the set of its rows and some combinatorial applications},
  author={Kelmans, Alexander K and Kimelfeld, Boris N},
  journal={Discrete Mathematics},
  volume={44},
  number={1},
  pages={113--116},
  year={1983},
  publisher={Elsevier}
}

@book{ManningSchutze99,
  title={Foundations of Statistical Natural Language Processing},
  author={Manning, Christopher and Schutze, Hinrich},
  year={1999},
  publisher={MIT press}
}

@Article{Stuart10,
  Title                    = {Inverse problems: {A B}ayesian perspective},
  Author                   = {Andrew M. Stuart},
  journal                  = {Acta Numerica},
  Year                     = {2010},
  Pages                    = {451-559},
  Volume                   = {19}
}

@article{Simon77,
  title={Notes on infinite determinants of Hilbert space operators},
  author={Simon, Barry},
  journal={Advances in Mathematics},
  volume={24},
  number={3},
  pages={244--273},
  year={1977},
  publisher={Elsevier}
}

@article{BoydSnigireva19,
  title={On the analyticity of the {F}redholm determinant},
  author={Boyd, Christopher and Snigireva, Nina},
  journal={Monatshefte f{\"u}r Mathematik},
  volume={190},
  number={4},
  pages={675--687},
  year={2019},
  publisher={Springer}
}

@book{Simon05,
  author    = {Simon, Barry},
  title     = {Trace Ideals and Their Applications},
  edition   = {2},
  publisher = {American Mathematical Society},
  year      = {2005},
  series    = {Mathematical Surveys and Monographs},
  volume    = {120},
  address   = {Providence, RI}
}

@article{EswarRaoSaibaba2026,
  author  = {Eswar, Srinivas and Rao, Vishwas and Saibaba, Arvind K.},
  title   = {Bayesian {D}-Optimal Experimental Designs via Column Subset Selection},
  journal = {SIAM Journal on Scientific Computing},
  volume  = {48},
  number  = {2},
  pages   = {A905--A928},
  year    = {2026},
  doi     = {10.1137/24M1649861},
}

@article{AlexanderianDiazRaoEtAl2026,
  author  = {Alexanderian, Alen and D{\'i}az, Hugo and Rao, Vishwas and Saibaba, Arvind K.},
  title   = {Optimal Sensor Placement under Model Uncertainty in the Weak-Constraint {4D-Var} Framework},
  journal = {Foundations of Data Science},
  volume  = {11},
  pages   = {1--28},
  year    = {2026},
  doi     = {10.3934/fods.2026002},
}

@article{Deng11,
  title={A generalization of the {S}herman--{M}orrison--{W}oodbury formula},
  author={Deng, Chun Yuan},
  journal={Applied Mathematics Letters},
  volume={24},
  number={9},
  pages={1561--1564},
  year={2011},
  publisher={Elsevier}
}

@article{AlexanderianPetraStadlerEtAl21,
  title={Optimal design of large-scale {Bayesian} linear inverse problems under reducible model uncertainty: Good to know what you don't know},
  author={Alexanderian, Alen and Petra, Noemi and Stadler, Georg and Sunseri, Isaac},
  journal={SIAM/ASA Journal on Uncertainty Quantification},
  volume={9},
  number={1},
  pages={163--184},
  year={2021},
  publisher={SIAM}
}

@book{ReedSimon80,
  author    = {Reed, Michael and Simon, Barry},
  title     = {Methods of Modern Mathematical Physics. I: Functional Analysis},
  publisher = {Academic Press},
  year      = {1980}
}

@book{MacCluer09,
  title={Elementary Functional Analysis},
  author={MacCluer, Barbara D},
  volume={253},
  year={2009},
  publisher={Springer}
}

@book{Asch22,
  title={A Toolbox for Digital Twins: From Model-Based to Data-Driven},
  author={Asch, Mark},
  year={2022},
  publisher={SIAM}
}

@book{KaipioSomersalo05,
  author    = {Kaipio, Jari P. and Somersalo, Erkki},
  title     = {Statistical and Computational Inverse Problems},
  publisher = {Springer},
  year      = {2005},
  doi       = {10.1007/b138659},
}

@article{HuanJagalurMarzouk24,
  title={Optimal experimental design: Formulations and computations},
  author={Huan, Xun and Jagalur, Jayanth and Marzouk, Youssef},
  journal={Acta Numerica},
  volume={33},
  pages={715--840},
  year={2024},
  publisher={Cambridge University Press}
}

@book{Federov1972:1,
	title = {Theory of {Optimal} {Experiments}},
	language = {en},
	publisher = {Academic Press},
	author = {Fedorov, V V},
	year = {1972},
}

@inproceedings{BadanidiyuruVondrak2014,
  author    = {Badanidiyuru, Ashwinkumar and Vondr{\'a}k, Jan},
  title     = {Fast algorithms for maximizing submodular functions},
  booktitle = {Proceedings of the Twenty-Fifth Annual ACM-SIAM Symposium on
               Discrete Algorithms},
  pages     = {1497--1514},
  publisher = {SIAM},
  year      = {2014},
  doi       = {10.1137/1.9781611973402.110}
}

@article{SchreiberBilmesNoble2020,
  author  = {Schreiber, Jacob and Bilmes, Jeff and Noble, William Stafford},
  title   = {apricot: Submodular selection for data summarization in Python},
  journal = {Journal of Machine Learning Research},
  volume  = {21},
  number  = {161},
  pages   = {1--6},
  year    = {2020}
}

@misc{KaushalRamakrishnanIyer2022,
  author        = {Kaushal, Vishal and Ramakrishnan, Ganesh and Iyer, Rishabh},
  title         = {{Submodlib}: A Submodular Optimization Library},
  year          = {2022},
  note          = {arXiv preprint arXiv:2202.10680},
  eprint        = {2202.10680},
  archivePrefix = {arXiv},
  primaryClass  = {cs.LG},
}

\appendix

\section{Proof of Lemma~\ref{lem:basic}}\label{appdx:lemma_proof}
\begin{proof}
Note that $\G\G^*$ and $\G^*\G$ belong to $\Lsymp(\M)$ and $\Lsymp(\mathcal{K})$,
respectively. 
It is straightforward to note that $\I + \G^*\G$ and 
$\I + \G \G^*$ are both invertible.
Furthermore, 
\begin{align*}
  (\I+\G^*\G)&\bigl(\I - \G^*(\I+\G\G^*)^{-1}\G\bigr)\\ 
  &= \I - \G^*(\I+\G\G^*)^{-1}\G + \G^*\G - \G^*\G\,\G^*(\I+\G\G^*)^{-1}\G \\
  &= \I + \G^*\G - \G^*(\I+\G\G^*)^{-1}\G - \G^*(\G\G^*)(\I+\G\G^*)^{-1}\G \\
  &= \I + \G^*\G - \G^*(\I+\G\G^*)(\I+\G\G^*)^{-1}\G\\ 
  &= \I + \G^*\G - \G^*\G\\ 
  &= \I. \qedhere
\end{align*}
\end{proof}

\end{document}